\documentclass[11pt,a4paper]{article}
\usepackage[utf8]{inputenc}
\usepackage[english]{babel}
\usepackage{amsmath}
\usepackage{amsthm}
\usepackage{enumerate}
\usepackage{amsfonts}
\usepackage{cancel}
\usepackage{amssymb}
\usepackage{makeidx}
\usepackage{graphicx}
\usepackage{lmodern}
\usepackage{indentfirst}
\usepackage{xcolor}

\newtheorem{defi}{Definition}[section]
\newtheorem{teo}[defi]{Theorem}
\newtheorem{lem}[defi]{Lemma}

\newtheorem{rem}[defi]{Remark}

\numberwithin{equation}{section}

\newcommand{\R}{\mathbb{R}}
\newcommand{\Sp}{\mathbb{S}}
\newcommand{\Hy}{\mathbb{H}}
\newcommand{\Q}{\mathbb{Q}}
\newcommand{\E}{\mathbb{E}}
\newcommand{\e}{\epsilon}
\newcommand{\pie}{\langle}
\newcommand{\pid}{\rangle}

\newcommand{\WP}{\R\times_{\rho}\Q^n_{\epsilon}}
\newcommand{\IWP}{I\times_{\rho}\Q^n_{\epsilon}}
\newcommand{\co}{\colon}
\newcommand{\Span}{\text{span}}
\newcommand{\dt}{\partial_t}
\newcommand{\al}{\alpha}
\newcommand{\np}{\nabla^\perp}
\newcommand{\nb}{\overline\nabla}
\newcommand{\nt}{\widetilde\nabla}
\newcommand{\pl}{\langle}
\newcommand{\pr}{\rangle}

\title{Surfaces with parallel mean curvature in warped product spaces}

\author{Fernando Manfio\footnote{The first author is supported by FAPESP, grant 2022/16097-2. The second author was supported by CAPES, grant 88887.342501/2019-00.}, Verônica Reis and Feliciano Vitório}
\date{}

\newcommand{\Addresses}{{
\bigskip 
\footnotesize
University of S\~ao Paulo, Brazil \\
\textit{E-mail address:} \texttt{manfio@icmc.usp.br, reissantanaveronica@gmail.com}
\vspace{.2cm} \\
Federal University of Alagoas, Brazil \\
\textit{E-mail address:} \texttt{feliciano@pos.mat.ufal.br}
}}

\begin{document}	

\maketitle

\begin{abstract}
In this work, we obtain a geometric description of surfaces $M^2$ of arbitrary codimension in the warped product $\R\times_\rho\Q^n_\e$, with parallel mean curvature vector field in the normal connection, extending a result by Alencar-do Carmo-Tribuzy \cite{AlCaTr}.
\end{abstract}

\noindent {\bf Key words:} {\small {\em Parallel mean curvature, warped product, reduction of codimension.}}

\section{Introduction}

A classical result concerning surfaces $M^2$ in Euclidean space $\mathbb{R}^3$, due to Hopf \cite{Ho}, states that any surface homeomorphic to a sphere with constant mean curvature is, in fact, isometric to a round sphere. This theorem was later extended by Chern \cite{Ch} to other space forms and, more recently, to surfaces in $3$-dimensional homogeneous manifolds with a $4$-dimensional isometry group by Abresch and Rosenberg \cite{AbRo}.

A natural class of ambient spaces for which Hopf-type theorems hold is that of product spaces $\Q^n_\e\times\R$, where $\Q^n_\e$ denotes either the unit sphere $\Sp^n$ or the hyperbolic space $\Hy^n$, according as $\e=1$ or $\e=-1$, respectively. A pioneering work in this direction was conducted by Alencar-do Carmo-Tribuzy \cite{AlCaTr}, who studied surfaces with parallel mean curvature vector in $\Q^n_\e\times\R$. They proved that either $H$ is an umbilic direction or the immersion admits a reduction of codimension to three.  

The study of surfaces and, more generally, submanifolds with parallel mean curvature vector field has long been a subject of interest in differential geometry. These submanifolds generalize both minimal and constant mean curvature surfaces by requiring the mean curvature vector $H$ to be parallel with respect to the normal connection. As demonstrated by Chen \cite{Chen}, Yau \cite{Yau}, and Aliás-Dajczer \cite{AlDa}, for example, this condition imposes significant rigidity and geometric constraints on the immersion.

In this work, we investigate surfaces with parallel mean curvature vector field in higher-dimensional warped product spaces $\R\times_\rho\Q^n_\e$. Following the approach in \cite{AlCaTr}, our main result does not require global assumptions on the surface $M^2$. We show that either $H$ is an umbilical direction or the immersion admits a reduction of codimension. Specifically, we establish the following:

\begin{teo} \label{teo:main}
Let $f\co M^2\to\WP$, $n\geq 5$, be a surface with nonzero parallel mean curvature vector field. Then, one of the following possibilities holds:
\begin{enumerate}
\item[(i)] $f$ is a minimal surface of a umbilical hypersurface of a slice $\{t\}\times_{\rho}\Q^{n}_{\epsilon}$.
\item[(ii)] $f$ is a surface with constant mean curvature in a three-dimensional umbilical or totally geodesic submanifold of a slice $\{t\}\times_{\rho}\Q^{n}_{\epsilon}$.
\item[(iii)] $f(M^2)$ lies in a totally geodesic submanifold $\R\times_{\rho}\Q^{m}_{\epsilon}$, $m\leq 4$, of $\WP$.
\end{enumerate}
\end{teo}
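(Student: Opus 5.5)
We follow the strategy of Alencar--do Carmo--Tribuzy \cite{AlCaTr}, adapted to the warped metric $dt^2+\rho(t)^2 g_\e$. Write $T=\partial_t$ and decompose $T=f_*X+\xi$ along $f$, with $X$ tangent and $\xi$ normal. The Codazzi equation of $\WP$ contains curvature terms built from $\rho'/\rho$ and $(\rho'^2-\e)/\rho^2$ paired with $T$. Hence, for the quadratic form $Q(Z,Z)=2\pie \alpha(Z,Z),H\pid - c\,\pie T,Z\pid^2$, with a suitable function $c$ of $t$, the $(2,0)$-part in a local conformal parameter $z$ is holomorphic, or at least satisfies $|\partial_{\bar z}Q|\le \phi|Q|$ with $\phi$ locally bounded. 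The proof uses $\np H=0$ and the fact that $\nb T$ is a multiple of the vertical projection. If no such $c$ makes $Q$ exactly holomorphic, we use the Hopf differential $\pie \alpha(\partial_z,\partial_z),H\pid$ alone and check its $\bar z$-derivative directly. By the standard lemma on such differentials, either $Q^{(2,0)}\equiv0$ or its zeros are isolated.

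\textbf{Case 1: $H$ is an umbilical direction on an open set.} Then $A_H=|H|^2 I$. Combining this with Codazzi and $\np H=0$ forces $X=0$, that is, $T$ is normal. So $t$ is constant along $M$ and $f(M)$ lies in a slice $\{t_0\}\times_\rho\Q^n_\e$, which is totally umbilical in $\WP$ with mean curvature vector $-(\rho'/\rho)T$. Inside the slice, which is a space form, the mean curvature vector $H_0=H+(\rho'/\rho)T$ is still parallel. Two subcases arise. If $H_0$ is umbilical in the slice and $H_0\neq0$, Yau's theorem \cite{Yau} places $f$ as a minimal surface of an umbilical hypersurface of the slice, which is (i). Otherwise, Chen--Yau reduction \cite{Chen,Yau} puts $f$ as a CMC surface in a $3$-dimensional umbilical or totally geodesic submanifold of the slice, which is (ii). The case $H_0=0$ is excluded because $H\ne0$ and $H$ is parallel, or it falls into (ii) with zero mean curvature in the slice; this needs a short check.

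\textbf{Case 2: $H$ is not umbilical on an open dense set.} Define the first normal space $N_1=\Span\{\alpha(Y,Z)\}$. We want to show that the subbundle $L=N_1+\Span\{H,\xi\}$, together with $TM$ and $T$, spans a parallel distribution of rank at most $5=1+4$ that is invariant under $\nb$. From $\np H=0$ and the non-umbilicity of $H$, the Ricci equation gives $[A_H,A_\eta]=\overline R$-terms, which vanish or are controlled because the curvature of $\WP$ on normal pairs involving $H$ only sees components along $T$. This shows that $A_\eta$ commutes with $A_H$, so all shape operators are simultaneously diagonal. With $\dim M=2$, the traceless parts of the $A_\eta$ then span at most a $1$-dimensional space, so $\dim N_1\le2$. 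Next we show, via Codazzi, that $\np$ of sections of $L$ stays in $L$, so $L$ is parallel of rank at most $3$ containing $\xi$.

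To conclude, we invoke the reduction-of-codimension criterion for warped products. Totally geodesic submanifolds $\R\times_\rho\Q^m_\e$ are exactly the ones through $T$. So if $L\oplus f_*TM$ is $\nb$-invariant, contains $T$, and has rank at most $m+1=5$, then $f(M)$ lies in $\R\times_\rho\Q^4_\e$, which is (iii). By analyticity and continuity, the open pieces glue. The main obstacle is the Ricci/Codazzi step: the curvature of $\WP$ is not constant, so the commutation $[A_H,A_\eta]=0$ and the $\np$-invariance of $L$ pick up extra terms proportional to $\pie\xi,\eta\pid X$. To deal with them, we would first prove that $\xi\in L$ and use $\nb T=(\rho'/\rho)(\,\cdot-\pie\cdot,T\pid T)$, so that these terms land back in $L$.
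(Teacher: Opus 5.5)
Your skeleton matches the paper's: the umbilical case gives a slice and then Yau; otherwise Ricci gives simultaneous diagonalization, so $\dim N_1\le 2$, $L$ is parallel, codimension reduces, and analyticity glues the pieces. You can drop the Hopf-differential preamble. It is never used, and it is unverified as stated, since no $c$ is fixed and no $\bar z$-derivative is computed. The paper needs only the dichotomy ``$A_H\equiv\|H\|^2I$, or $A_H\neq\|H\|^2I$ on an open set'' plus analyticity.

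The genuine gap is the step that carries Case 2: ``we show, via Codazzi, that $\np$ of sections of $L$ stays in $L$'' is asserted, not proved, and your diagnosis of the obstacle is off. Write $\dt=f_*T+\eta$ as in the paper (your $X,\xi$). The Ricci equation has no extra terms at all: $\pl\overline R(X,Y)\zeta,\zeta'\pr=0$ for $X,Y$ tangent and $\zeta,\zeta'$ normal, so \eqref{eq:Ricci} gives $[A_H,A_\zeta]=0$ exactly. There is also nothing to prove about $\eta\in L$, since it lies in $L$ by definition. The point is to work in $L^\perp\subset\{\eta\}^\perp$, where the ambient term $\mu(t)\pl\eta,\zeta\pr(X\wedge Y)T$ of \eqref{eq:Codazzi2} vanishes. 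The missing argument is Dajczer's (Theorem~\ref{teo:redCodim}). For $\zeta\in L^\perp$ we have $A_\zeta=0$, so \eqref{eq:Codazzi2} becomes $A_{\np_X\zeta}Y=A_{\np_Y\zeta}X$. In the eigenframe $e_1,e_2$ of $A_H$, write $A_{\np_{e_k}\zeta}e_i=\lambda_{ki}e_i$. The symmetry gives $\lambda_{12}e_2=\lambda_{21}e_1$, so $\lambda_{12}=\lambda_{21}=0$. Then $\lambda_{kk}=\operatorname{tr}A_{\np_{e_k}\zeta}=2\pl H,\np_{e_k}\zeta\pr=-2\pl\np_{e_k}H,\zeta\pr=0$. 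Hence $\np\zeta\perp N_1$. Also \eqref{eq:dereta} gives $\pl\np_X\zeta,\eta\pr=\pl\al_f(X,T),\zeta\pr+\frac{\rho'}{\rho}\pl X,T\pr\pl\eta,\zeta\pr=0$. So $L^\perp$, and hence $L$, is parallel wherever $L$ has constant rank.

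A second unproved step is the passage from ``$f_*TM\oplus L$ is $\nb$-invariant and contains $\dt$'' to $f(M)\subset\R\times_\rho\Q^4_\e$. In this non-symmetric ambient it is not automatic. The paper proves it in Lemma~\ref{lem:RedCodim} by passing to $\tilde f=i\circ f$ in $\E^{n+2}$. For $\zeta\in L^\perp$, $A^{\tilde f}_{i_*\zeta}=A^f_\zeta=0$, and the $N_t$-term in \eqref{eq:nabtilperp} vanishes because $\zeta\perp\eta$. So $i_*L^\perp$ is a constant subspace, and $f(M)$ lies in the intersection of $\WP$ with its orthogonal complement, namely $\R\times_\rho\Q^{l+1}_\e$. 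This is also where $n\ge5$ is used ($l\le3<n-1$).

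In Case 1, ``forces $X=0$'' also needs its argument. Pairing Codazzi with $H$ gives $\mu(t)\|T\|^2\pl\eta,H\pr=0$, which uses $\mu(t)\neq0$. Differentiating $\pl\eta,H\pr$ along $T$ with \eqref{eq:dereta} then gives $\|H\|^2\|T\|^2=0$. You only obtain this on an open set, so analyticity is needed to place all of $f(M)$ in one slice.

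Finally, your ``short check'' for $H_0=0$ cannot succeed either way. Take a minimal surface of a slice $\{t_0\}\times_\rho\Q^n_\e$ with $\rho'(t_0)\neq0$. It has $T=0$, hence $\np\dt=0$ by \eqref{eq:dereta}, so $H=-\frac{\rho'(t_0)}{\rho(t_0)}\dt$ is nonzero and parallel, with $A_H=\|H\|^2I$. If it is full in the slice and $n\ge5$, it lies in no umbilical hypersurface or $3$-dimensional umbilical submanifold of the slice, and in no $\R\times_\rho\Q^4_\e$. Yau's theorem requires $H_0\neq0$. The paper's appeal to it passes over this case too, so this case must be listed as a separate alternative, or absorbed into (i) by allowing the umbilical hypersurface to be the slice itself.
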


A key step in the proof of Theorem \ref{teo:main} is a codimension reduction result for isometric immersions into $\R\times_\rho\Q^n_\e$ (cf. Section \ref{sec:CodimReduc}), which generalizes the results of \cite{MeTo} for the product space $\Q^n_\e\times\R$.

\section{Preliminaries}

Let $\Q^n_\e$ denote either the sphere or the hyperbolic space of dimension $n$, according to whether $\e=1$ or $\e=-1$, respectively.
We also denote by $\E^{n+2}$ either the Euclidean space or the Lorentzian space of dimension $n+2$, according as $\e=1$ or $\e=-1$, respectively. If $(x_1,\ldots,x_{n+2})$ are the standard coordinates on $\E^{n+2}$ with respect to which the flat metric is written as
\[
ds^2 = \e dx_1^2+dx_2^2+\ldots+dx_{n+2}^2,
\]
we will regard $\E^{n+1}$ as
\[
\E^{n+1} = \{(x_1,\ldots,x_{n+2})\in\E^{n+2}:x_{n+2}=0\},
\]
and the quadric $\Q^n_\e$ regarded as
\[
\Q^n_\e = \{(x_1,\ldots,x_{n+1})\in\E^{n+1}:\e x_1^2+x_2^2+\ldots
+x_{n+1}^2=\e\},
\]
with $x_1>0$ if $\e=-1$. 

Given an open interval $I\subset\R$ and a smooth positive function $\rho\co I\to\R$, let us consider the warped product $\IWP$ endowed with the warped product metric
\[
\pie,\pid = \pi_1^\ast dt^2 + (\rho\circ\pi_1)^2\pi_2^\ast
\pie,\pid_{\Q^n_\e},
\]
where $\pi_1\co\IWP\to I$ and $\pi_2\co\IWP\to\Q^n_\e$ denote the canonical projections. We will consider the warped product manifold $\IWP$ as a rotational hypersurface of $\E^{n+2}$ parametrized by
\begin{equation} \label{eq:inclusion_i}
i(t,x) = (\rho(t)x,h(t)),    
\end{equation}
where $h\co I\to\R$ is a smooth function satisfying $\rho'(t)^2+\e h'(t)^2=1$ and $h'(t)>0$, for all $t\in I$ and $x\in\Q^n_\e$. Note that the warped product $\IWP$ admits a distinguished unit vector field
\[
\dt=(\rho'(t)x,h'(t)),
\]
which is tangent to the first factor $I$, i.e., $\dt$ is a horizontal vector field along $\IWP$. If $\nb$ denotes the Levi-Civita connection on $\IWP$, one has
\begin{equation} \label{eq:Nabladelt}
\nb_Z\dt = \frac{\rho'(t)}{\rho(t)}\left(Z-\pie Z,
\dt\pid\dt\right),
\end{equation}
for every $Z\in T(\IWP)$.

\vspace{.2cm}

Given an isometric immersion $f\co M^m\to\WP$, a tangent vector field $T$ on $M^m$ and a normal vector field $\eta$ along $f$ are defined by
\begin{equation} \label{eq:eta}
\partial_t= f_{\ast} T+\eta.    
\end{equation}
Using \eqref{eq:Nabladelt}, we obtain by differentiating \eqref{eq:eta} that
\begin{eqnarray} \label{eq:derT}
\nabla_XT - A^f_\eta X = \frac{\rho'(t)}{\rho(t)}
\left(X-\langle X,T\rangle T\right)
\end{eqnarray}
and
\begin{eqnarray} \label{eq:dereta}
\al_f(X,T) + \nabla^{\perp}_X\eta = -\frac{\rho'(t)}{\rho(t)}
\langle X,T\rangle\eta,
\end{eqnarray}
for all $X\in TM$, where $\alpha_f$ and $\np$ denote the second fundamental form and the normal connection of $f$, respectively. Here $A^f_\eta$ stands for the shape operator of $f$ in the
direction $\eta$, given by
\[
\pie A^f_\eta X,Y\pid = \pie\al_f(X,Y,\eta\pid,
\]
for all $X,Y\in TM$.

The Gauss, Codazzi, and Ricci equations for $f$ are, respectively
\begin{eqnarray} \label{eq:Gauss}
\begin{aligned}
R(X,Y)Z \ = \ & \lambda(t)(X\wedge Y)Z + A^f_{\alpha_f(Y,Z)}X-A^f_{\alpha_f(X,Z)}Y \\
& + \mu(t)\big(\pie X,T\pid Y\wedge T - \pie Y,T\pid X\wedge T\big)Z,
\end{aligned}
\end{eqnarray}
\begin{eqnarray} \label{eq:Codazzi}
(\np_X\alpha_f)(Y,Z) - (\np_Y\alpha_f)(X,Z) = \mu(t)
\pie(Y\wedge X)Z,T\pid\eta
\end{eqnarray}
and
\begin{eqnarray} \label{eq:Ricci}
R^\perp(X,Y)\xi = \alpha_f(X,A^f_\xi Y)-\alpha_f(A^f_\xi X,Y),
\end{eqnarray}
where
\begin{equation} \label{eq:lambdamu}
\lambda(t) = \frac{\e-\rho'(t)^2}{\rho(t)^2}
\quad\text{e}\quad
\mu(t) = \frac{\rho''(t)\rho(t)-\rho'(t)^2+\e}{\rho(t)^2}.
\end{equation}
Equation \eqref{eq:Codazzi} can also be written as
\begin{equation} \label{eq:Codazzi2}
(\nabla_XA^f)(Y,\xi) - (\nabla_YA^f)(X,\xi) = \mu(t)\pl\eta,\xi\pr(X\wedge Y)T,
\end{equation}
where
\[
(X\wedge Y)T = \pl Y,T\pr X-\pl X,T\pr Y.
\]

Although this will not be used in this work, it is worth mentioning that equations \eqref{eq:derT}--\eqref{eq:Ricci} completely determine an isometric immersion $f\co M^m\to\IWP$ up to isometries of $\IWP$ (see \cite[Theorem 5.1]{FiVi}).

\begin{rem}
Recall that in a warped product $\IWP$, with $n\geq2$, the sectional curvature along a plane tangent to $\Q^n_\e$ is given by $(\e-\rho'(t)^2)/\rho(t)^2$. Meanwhile, the sectional curvature along a plane spanned by unit vector $\dt$ and $X$ tangent to $I$ and $\Q^n_\e$, respectively, is $-\rho''(t)/\rho(t)$. Therefore, $\IWP$ has constant sectional curvature equal to $c$ if and only if
\[
\rho'(t)^2+c\rho(t)^2=\e.
\]
Note that this implies $-\rho''(t)/\rho(t)=c$, or equivalently,
\[
\rho''(t)\rho(t)-\rho'(t)^2+\e=0
\]
(see \cite[Proposition 4.6]{MaTo} for further details). Since we assume that our ambient space $\IWP$ does not have constant sectional curvature, we may suppose that the function $\mu(t)$, defined in \eqref{eq:lambdamu}, is nowhere vanishing. 
\end{rem}


We will finish this section by relating the second fundamental forms and normal connections of $f$ and $\tilde f=i\circ f$, where $i$ is the inclusion given in \eqref{eq:inclusion_i}. The hypersurface $i\co\IWP\to\E^{n+2}$ admits a unit normal vector field given by
\[
\hat N_t = (h'(t)x,-\e\rho'(t)).
\]
Thus,
\[
\widetilde\nabla_Z\hat N_t = \frac{h'(t)}{\rho(t)}Z - 
\left(\frac{h'(t)}{\rho(t)} + \e\frac{\rho''(t)}{h'(t)}
\right)\langle Z,\dt\rangle\dt,
\]
for every $Z\in T(\IWP)$, where $\widetilde\nabla$ is the derivative in $\E^{n+2}$. Hence, the shape operator of $i$ in the direction of $\hat N_t$ is given by
\begin{equation} \label{eq:shapeA_i}
A^i_{\hat N_t}Z = -\frac{h'(t)}{\rho(t)}Z +
\left(\frac{h'(t)}{\rho(t)} + \e\frac{\rho''(t)}{h'(t)}
\right)\langle Z,\dt\rangle\dt.
\end{equation}

The normal spaces of $f$ and $\widetilde{f}$ are related by
\[
TM^\perp_{\tilde f} = i_\ast TM^\perp_f\oplus span\{N_t\},
\]
where $N_t$ denotes the restriction of $\hat N_t$ along $f$. Given a normal vector field $\xi\in TM^\perp_f$, we obtain from \eqref{eq:shapeA_i} that 

\begin{eqnarray} \label{eq:shapes}
\begin{aligned}
\nt_Xi_\ast\xi &= i_\ast\nb_X\xi + \alpha_i(f_\ast X,\xi) \\
& = -\tilde{f}_\ast A^f_{\xi}X + i_\ast\np_X\xi+ \alpha_i(f_\ast X,\xi)
\end{aligned}
\end{eqnarray}
Therefore, from \eqref{eq:shapes} and using Weingarten formula, one has
\begin{eqnarray} \label{eq:A^fA^tf}
A^{\tilde f}_{i_{\ast}\xi}=A^f_{\xi}
\end{eqnarray}
and
\begin{eqnarray} \label{eq:nabtilperp}
\widetilde{\nabla}^{\perp}_{X} i_{\ast} \xi=  i_\ast\np_X\xi+ \e\left(\frac{h'(t)}{\rho(t)}+\e\frac{\rho''(t)}{h'(t)}\right)\langle X, T\rangle \langle\eta,\xi\rangle N_t,
\end{eqnarray}
for every $\xi\in TM^\perp_f$, where $\widetilde{\nabla}^{\perp}$ is the normal connection of $\tilde{f}$.

\section{Reduction of codimension} \label{sec:CodimReduc}

In order to study surfaces with parallel mean curvature vector field in the warped product $\WP$, we need the following result on reduction of codimension of isometric immersions into $\WP$. We say that an isometric immersion $f\co M^m\to\WP$ reduces codimension to $l$ if $f(M^m)$ is contained in a totally geodesic submanifold $\R\times_\rho\Q^{m+l-1}_\e$ of $\WP$.

\vspace{.2cm}

We denote by $N_1(x)$ the {\em first normal space} of $f$ at $x$, i.e., the subspace of $T_xM^\perp$ spanned by its second fundamental form. A condition, due to Erbacher \cite{Er}, for a submanifold of space forms to reduce codimension is that its first normal spaces form a parallel subbundle of the normal bundle. The following result, proved initially for submanifolds in $\mathbb{Q}^n_c \times \mathbb{R}$ in \cite{MeTo}, is a version of the aforementioned result of Erbacher for submanifolds in warped products.

\begin{lem} \label{lem:RedCodim}
Let $f\co M^m\to\WP$ be an isometric immersion and assume that $L:= N_{1}+\Span\{\eta\}$ is a subbundle of $TM^\perp_f$ of rank $l<n+1-m$ and that $\nabla^{\perp}N_1\subset L$. Then f reduces codimension to $l$.
\end{lem}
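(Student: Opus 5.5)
We pass to the composition $\tilde f=i\circ f\co M^m\to\E^{n+2}$ and reduce the problem to an Erbacher-type argument in the flat space $\E^{n+2}$. The goal is a parallel subbundle of $TM^\perp_{\tilde f}$ of the form
\[
\widetilde L := i_\ast L\oplus\Span\{N_t\},
\]
of rank $l+1$, together with a fixed vector $e_{n+2}=(0,\ldots,0,1)$ lying in the relevant constant subspace.

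\textbf{Step 1: $\widetilde L$ is parallel.} Take $\xi\in L$. There are two cases.

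If $\xi\in N_1$, the hypothesis $\np N_1\subset L$ and \eqref{eq:nabtilperp} give $\widetilde\nabla^\perp_X i_\ast\xi\in i_\ast L\oplus\Span\{N_t\}$.

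If $\xi=\eta$, \eqref{eq:dereta} gives
\[
\np_X\eta=-\al_f(X,T)-\frac{\rho'}{\rho}\pl X,T\pr\eta\in L .
\]
Linear combinations with smooth coefficients only add terms in $L$, so $i_\ast L$ is mapped into $\widetilde L$.

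For $N_t$, we compute $\widetilde\nabla_X N_t$ from the formula for $\widetilde\nabla_Z\hat N_t$ with $Z=f_\ast X$. This vector lies in $\Span\{f_\ast X,\dt\}$. Since $\dt=f_\ast T+\eta$, its normal component is a multiple of $\eta$, which lies in $L$.

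Hence $\widetilde L$ is parallel. Moreover $A^{\tilde f}_\zeta=0$ for every $\zeta$ orthogonal to $\widetilde L$ in $TM^\perp_{\tilde f}$. Indeed, such $\zeta$ is $i_\ast$ of a vector orthogonal to $N_1$, and by \eqref{eq:A^fA^tf} its shape operator vanishes.

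\textbf{Step 2: a constant affine subspace.} By the standard Erbacher argument in flat space, $V:=\tilde f_\ast TM\oplus\widetilde L$ is a constant subspace of $\E^{n+2}$ of dimension $m+l+1$. The reason is that its derivative stays in $V$: for $\zeta\perp V$ we have $\widetilde\nabla_X\zeta=-\tilde f_\ast A_\zeta X+\widetilde\nabla^\perp_X\zeta$, and the orthogonal complement of $\widetilde L$ is also parallel.

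We now check that $e_{n+2}\in V$. At points of $\IWP$ we have
\[
e_{n+2}=h'\dt-\e\rho'\hat N_t .
\]
Along $f$, $\dt=f_\ast T+\eta\in V$ and $N_t\in V$, so $e_{n+2}\in V$.

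Also the position vector satisfies $\tilde f=(\rho x,h)$, so $\tilde f-h\,e_{n+2}=(\rho x,0)$. We have $\rho x=\rho\,(\rho' x,h')-\rho\rho'\ldots$, which needs care. Instead, write the identity
\[
(\rho x,0)=\rho\big(\rho'\dt+\e h'\hat N_t\big)\cdot c
\]
for a suitable scalar $c$, using $\rho'^2+\e h'^2=1$. Concretely, $\rho'\dt+\e h'\hat N_t=(x,0)$. Hence $(x,0)\in V$ along $f$.

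\textbf{Step 3: the subspace through the origin, and conclusion.} The remaining task is to show that $V$ contains the origin in the appropriate sense, so that $\tilde f(M)\subset V$ as a linear subspace. Then $V\cap(\E^{n+1}\times\{0\})$ is a linear subspace $W$ of dimension $m+l$ containing all the $x$'s. In that case $\pi_2\circ f$ lies in $\Q^n_\e\cap W=\Q^{m+l-1}_\e$, so $f(M)\subset\R\times_\rho\Q^{m+l-1}_\e$, which is totally geodesic.

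This is the main obstacle. We need $\tilde f$ itself, and not just its derivatives, to lie in $V$. The plan is to argue that $(x,0)$ and $e_{n+2}$ are both sections of $V$ along $M$. Then $\tilde f=\rho(x,0)+h\,e_{n+2}$ is a pointwise combination of vectors in the fixed linear subspace $V$, so $\tilde f(M)\subset V$.

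The rank condition $l<n+1-m$ is used to ensure that $W$ is a proper subspace, so that the reduction is genuine. Connectedness of $M$ is needed for the constancy of $V$.
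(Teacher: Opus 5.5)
Your argument is correct and is essentially the paper's proof. Your constant subspace $V=\tilde f_\ast TM\oplus i_\ast L\oplus\Span\{N_t\}$ is exactly the paper's $K=(i_\ast L^\perp)^\perp$: you show $\widetilde L$ is parallel, while the paper shows directly that $i_\ast L^\perp$ is parallel with vanishing shape operators, which is the dual formulation. The position vector then lies in $V$ because $\partial_t,N_t\in V$, just as in the paper, so your Step 3 is already complete once Step 2 is done.

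One small slip: the identity $e_{n+2}=h'\partial_t-\epsilon\rho'\hat N_t$ fails for $\epsilon=-1$. This does not affect the argument, since $e_{n+2}=(\partial_t-\rho'(x,0))/h'$ together with your correct identity $(x,0)=\rho'\partial_t+\epsilon h'\hat N_t$ still gives $e_{n+2}\in V$.
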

\begin{proof}
It follows from \eqref{eq:dereta} that $\np_X\eta\in L$, for every $X\in TM$. Since we have $\np N_1\subset L$ by assumption, it follows that $L$ is a parallel subbundle of $TM^\perp_f$.
Let $L^\perp$ denote the orthogonal complement of $L$ in $TM^\perp_f$. Given $\xi\in L^\perp=N_1^\perp\cap\{\eta\}^\perp$, from \eqref{eq:nabtilperp} and the fact that $L$ is a parallel subbundle of $TM^\perp_f$, we obtain
\[
\widetilde{\nabla}_X^\perp i_{\ast}\xi = 
i_{\ast}\nabla^{\perp}_{X} \xi \in i_{\ast}L^{\perp},
\]
where $\widetilde{\nabla}$ denotes the derivative of $\E^{n+2}$. This shows that $i_{\ast}L^{\perp}$ is a parallel subbundle of $TM^\perp_{\tilde f}$. On the other hand, it follows from \eqref{eq:A^fA^tf} that
\[
i_{\ast}L^{\perp}\subset\widetilde{N}_{1}^{\perp},
\]
where $\widetilde{N}_1(x)$ is the first normal space of $\tilde{f}$ at $x\in M^m$. We obtain from the Weingarten formula for $\tilde{f}$ that
\[
\widetilde{\nabla}_{X}i_{\ast}\xi = -\tilde{f}_{\ast}A_{ i_{\ast}\xi}^{\tilde{f}}X + \widetilde{\nabla}_{X}^{\perp}  i_{\ast}\xi=\widetilde{\nabla}_{X}^{\perp}  i_{\ast}\xi \in i_{\ast}L^{\perp},
\]
This implies that $i_{\ast}L^{\perp}$ is a constant subspace of $\E^{n+2}$, which is orthogonal to $\dt$. 
Denote by $K$ the orthogonal complement of $i_{\ast}L^{\perp}$ in $\E^{n+2}$. For any fixed point $x_0\in M^m$, we have
\[
\tilde f(M)\subset\tilde{f}(x_0)+K.
\]
Since $K$ contains $\dt$ and $N_t(x_0)$, it also contains the position vector $\tilde{f}(x_0)$. Thus, we have $\tilde{f}(x_0)+K=K$. We conclude that
\[
\tilde{f}(M)\subset(\R\times_{\rho}\Q^{n}_{\epsilon})\cap K
= \R\times_{\rho}\Q^{m+l-1}_{\epsilon},
\]
and this completes the proof.
\end{proof}

A necessary and sufficient condition for parallelism of the first normal bundle of a submanifold of a space form in terms of its normal curvature tensor $R^\perp$ and mean curvature vector field $H$ was obtained by Dajczer \cite{Da}, whose proof can be adapted to yield the following result for submanifolds of $\WP$. The corresponding result for submanifolds in $\Q^n_\e\times\R$ was obtained in \cite[Theorem 1.7]{MeTo}.

\begin{teo} \label{teo:redCodim}
Let $f\co M^m\to\WP$ be an isometric immersion and assume that $L:= N_{1}+span\{\eta\}$ is a subbundle of $TM^\perp_f$ of rank $l<n+1-m$. Then $\nabla^{\perp}N_1 \subset L$ if and only if the following two conditions hold:
\begin{enumerate}
\item[(i)] $\nabla^{\perp}R^{\perp}\vert_{L^{\perp}}=0$,
\item[(ii)] $\nabla^{\perp}H \in L$.
\end{enumerate}
\end{teo}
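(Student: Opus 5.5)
Necessity is the easy direction and I would do it first. If $\np N_1\subset L$, then $L$ is parallel, because $\np\eta\in L$ by \eqref{eq:dereta}; hence $L^\perp$ is also a parallel subbundle. Since $H\in N_1$, condition (ii) is immediate. For (i), take $\xi\in L^\perp$. Then $A^f_\xi=0$, since $\xi\perp N_1$. The Ricci equation \eqref{eq:Ricci} gives $\pl R^\perp(X,Y)\zeta,\xi\pr=-\pl R^\perp(X,Y)\xi,\zeta\pr=0$ for all normal $\zeta$. So $R^\perp(X,Y)\xi=0$, and $R^\perp(X,Y)$ maps into $L$ and kills $L^\perp$. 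Differentiating $R^\perp(Y,Z)\xi=0$ along $X$, and using that $\np_X\xi\in L^\perp$, gives $(\np_XR^\perp)(Y,Z)\xi=0$. That is (i).

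For sufficiency I would adapt Dajczer's argument. Fix $\xi\in L^\perp$. It suffices to show that $\pl\np_X\delta,\xi\pr=0$ for every $\delta\in N_1$, or equivalently that $\np_X\xi\in L^\perp$ for all $X$ (using $\np_X\eta\in L$). Set $B_X:=A^f_{\np_X\xi}$. Since $\pl\al_f(Y,Z),\xi\pr=0$ identically, differentiating gives
\[
\pl(\np_X\al_f)(Y,Z),\xi\pr=-\pl\al_f(Y,Z),\np_X\xi\pr=-\pl B_XY,Z\pr .
\]
I then insert the Codazzi equation \eqref{eq:Codazzi}. Its right-hand side is a multiple of $\eta$, and $\eta\perp\xi$, so the expression above is symmetric in $X,Y$. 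Therefore the trilinear form $C(X,Y,Z)=\pl B_XY,Z\pr$ is totally symmetric. Tracing in $Y,Z$ gives $\operatorname{tr}B_X=m\pl H,\np_X\xi\pr=-m\pl\np_XH,\xi\pr=0$ by (ii). Hence $\sum_i C(X,e_i,e_i)=0$ for all $X$, and by symmetry $\sum_iB_{e_i}e_i=0$.

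Next I use (i) together with the Ricci equation. As in the necessity part, $A_\xi=0$ gives $R^\perp(X,Y)\xi=0$. Then (i) yields $R^\perp(Y,Z)\np_X\xi=\np_X(R^\perp(Y,Z)\xi)-(\np_XR^\perp)(Y,Z)\xi=0$ for all $X,Y,Z$. By Ricci this means $[A_\zeta,B_X]=0$ for every normal $\zeta$. Taking $\zeta=\np_X\xi$ is trivial, so I use $\zeta$ ranging over the other vectors $\np_W\xi$. This gives $[B_X,B_W]=0$, and the $B_X$ are simultaneously diagonalizable.

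The main obstacle is the final step: deducing $B_X=0$ from three facts, namely that $B$ is a totally symmetric cubic form, that it is traceless in the sense $\sum_i B_{e_i}e_i=0$, and that the $B_X$ commute. My plan is to diagonalize in a common orthonormal eigenbasis $\{e_i\}$. Total symmetry gives $C(e_i,e_j,e_k)=0$ unless $i=j=k$, so $B_{e_i}=c_ie_i\otimes e_i$. Tracelessness then forces $c_i=0$ for each $i$, and hence $B=0$. Thus $\np_X\xi\perp N_1$. Since also $\pl\np_X\xi,\eta\pr=-\pl\xi,\np_X\eta\pr=0$, we get $\np_X\xi\in L^\perp$. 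This proves that $L^\perp$, and hence $L$, is parallel, so $\np N_1\subset L$.

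The rank hypothesis only guarantees that $L^\perp\neq0$ is a genuine bundle. The only new ingredient compared with the space-form case is that the Codazzi term lies in the direction of $\eta$, and this is exactly why $\eta$ is included in $L$.
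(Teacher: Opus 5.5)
Your proof is correct and takes essentially the same route as the paper. Necessity uses $A_\xi=0$ for $\xi\perp N_1$ together with the parallelism of $L^\perp$, which you justify more explicitly than the paper does. Sufficiency uses (i) and the Ricci equation to make the operators $A_{\nabla^\perp_X\xi}$ commute, the Codazzi equation with $\xi\perp\eta$ to get $A_{\nabla^\perp_X\xi}Y=A_{\nabla^\perp_Y\xi}X$, and (ii) to kill the remaining diagonal terms in a common eigenbasis. Your totally symmetric, traceless cubic form is just a repackaging of that same argument.
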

\begin{proof}
Assuming that $\nabla^{\perp}N_1\subset L$, condition $(ii)$ is trivially satisfied, since $H\in N_1$. To prove $(i)$, first notice that for $\xi\in N_{1}^\perp$, the Ricci equation gives
\[
R^\perp(X,Y)\xi = \alpha_f(X,A^f_\xi Y) - \alpha_f(A^f_\xi X,Y)=0.
\]
Given $\xi\in L^\perp$, we have that $\xi\in N_1^\perp$. Moreover, by our assumption, one has $\nabla_Z^\perp\xi\in N_{ 1}^{\perp}$, for all $Z\in TM$. Thus, 
\begin{eqnarray*} 
(\nabla_{Z}R^{\perp})(X,Y,\xi) & = & \nabla_ZR^\perp(X,Y)\xi- R^{\perp}( \nabla_{Z}X,Y)\xi - R^{\perp}(X,\nabla_{Z}Y)\xi \\ 
&& -  R^{\perp}(X,Y)\nabla_{Z}^{\perp}\xi \ = \ 0,
\end{eqnarray*}
for all $X,Y\in TM$. Conversely, let $\xi\in L^\perp$. Since $R^{\perp}(X,Y)\xi=0$ for all $X,Y \in TM$, we obtain from $(i)$ that
\[
R^{\perp}(X,Y)\nabla_{Z}^{\perp}\xi=0,
\]
for every $X,Y,Z \in TM$. Using the Ricci equation again, we obtain that
\[
\left[A_{\nabla_{Z}^{\perp}\xi},A_{\nabla_{W}^{\perp}\xi}\right]=0,
\]
for every $Z,W \in TM$. Hence, at any point $x\in M$, there exists an orthonormal basis $\{Z_1,\ldots,Z_n\}$ of $T_x M$ that diagonalizes simultaneously all shape operators $A_{\nabla_{Z}^{\perp}\xi}$, with $Z\in TM$. We claim that
\[
\left\langle \nabla_{Z_{k}}^{\perp}\xi,\alpha_f(Z_{i},Z_{j}\right\rangle=0,
\]
for all $1\leq i,j,k \leq n$, which implies that $\nabla_Z^{\perp}\xi\in N_{1}^{\perp}$, for all $Z\in TM$. From the choice of the basis $\{Z_1,\ldots,Z_n\}$, we have
\[
\left\langle\alpha (Z_i,Z_j ), \nabla_{Z_{k}}^{\perp}\xi\right\rangle = \left\langle A_{\nabla_{Z_{k}}^{\perp}\xi} Z_i, Z_j\right\rangle = \lambda_{ki}\delta_{ij},
\]
where $\lambda_{k_i}$ is the eigenvalue of $A_{\nabla_{Z_{k}}^{\perp}\xi}$ corresponding to $Z_i$. It follows from the Codazzi equation \eqref{eq:Codazzi2} and the fact that $\xi\in L^{\perp}\subset \{\eta\}^{\perp}$ that
\[
A^f_{\nabla_{Z_{i}}^{\perp}\xi}Z_{k}= A^f_{\nabla_{Z_{k}}^{\perp}\xi}Z_{i}.
\]
This implies that the eigenvalue $\lambda_{ki}$ of $A^f_{\nabla_{Z_{k}}^{\perp}\xi}$ corresponding to $Z_i$ vanishes unless $k=i$. Therefore,
\[
\left\langle\alpha_f(Z_i,Z_i),\nabla_{Z_{k}}^{\perp}\xi\right\rangle = \left\langle A^f_{\nabla_{Z_{k}}^{\perp}\xi}Z_i, Z_i\right\rangle = \left\langle A^f_{\nabla_{Z_{i}}^{\perp}\xi}Z_k,Z_i\right\rangle=0, \ \textrm{ if } i\neq k.
\]
Finally, the assumption $\nabla^{\perp}H\in L$ and $\xi\in L^{\perp}$ imply that $\langle\nabla^{\perp}_{Z_i}H,\xi\rangle=0$. Therefore,
\[
\left\langle\alpha_f(Z_{i},Z_{i}),\nabla_{Z_{i}}^{\perp}\xi, \right\rangle= n\left\langle H,\nabla_{Z_i}^{\perp}\xi\right\rangle = -n\langle\nabla^{\perp}_{Z_i}H,\xi\rangle = 0,
\]
and this concludes the proof.
\end{proof}

\section{Proof of main result }

The proof of Theorem \ref{teo:main} makes use of the codimension reduction theorem, seen in Section 1.3, and also of an important fact regarding analytical functions, as discussed below.

\begin{rem} \label{rem:analytic}
Following the approach for submanifolds in space forms, we say that a surface $M^2$ immersed in a warped product $\WP$ is real analytic if, when viewed as a graph in local coordinates, its height function is real analytic. Equivalently, if the functions of two real variables that define locally the immersion are real analytic functions. Analyticity is related to elliptic PDE's. More precisely, the geometric assumption of the isometric immersion $f\co M^2\to\WP$ has parallel mean curvature vector field $H$, combined with the Gauss, Codazzi, and Ricci equations, forms a closed system of differential equations involving the components of the immersion. Since the surface $M^2$ and the ambient $\WP$ are analytic manifolds, the the system of PDE's derived from the fundamental equations of submanifold theory is an analytic system (cf. \cite{Mo}; see also \cite{Bl}). Solutions to analytic nonlinear elliptic systems of PDE's are real analytic, and this regularity property follows from the principle of analytic continuation. 
\end{rem}

\begin{proof}[Proof of Theorem \ref{teo:main}]
Since the mean curvature vector $H$ is parallel and nonzero, the function
\[
\mu:=\|H\|^2
\]
in $M^2$ is a non-zero constant. Let us first suppose that $A_{H}=\mu I$ everywhere on $M^2$. We claim that the vector field $T$ vanishes identically. Assuming otherwise, there exists an open subset $U$, where $T\neq0$. Choose a unit vector field $X$ on $U$ orthogonal to $T$. Then
\begin{equation} \label{eq:teoACT1}
\pl\alpha_f(X,T),H\pr = \langle A_{H} X,T\rangle 
= \mu\langle X,T\rangle=0. 
\end{equation}
By the Codazzi equation \eqref{eq:Codazzi} we have 
\begin{equation} \label{eq:teoACT2}
\left\pl(\nabla^{\perp}_{T} \alpha)(X,X)- (\nabla^{\perp}_{X} \alpha)(T,X),H\right\pr = -\e\mu(t)\|T\|^2\pl\eta,H\pr.
\end{equation}
It follows from \eqref{eq:teoACT1} and the fact that $\mu$ is constant on $M^2$ that the left-hand-side of \eqref{eq:teoACT2} is zero. Thus, the function $\pl\eta,H\pr$ vanishes on $U$, and hence
\[
0 = T\langle \eta, H\rangle = \langle \nabla_{T}^{\perp} \eta, H\rangle = \left\pl-\alpha(T,T)-\rho'/\rho\langle T, T\rangle \eta,H\right\pr = -\mu \vert \vert T\vert \vert^2.
\]
Since $\mu$ is a non-zero constant, it follows that $T$ vanishes on $U$. This is a contradiction and proves the claim. Therefore, if $A_H=\mu I$ everywhere on $M^2$, then $f(M^2)$ is contained in a slice $\{t_0\}\times_{\rho}\Q^n_\e$ of  $\WP$ and either possibilities $(i)$ or $(ii)$ holds by \cite[Theorem 4]{Yau}. Assume now that $A_{H}\neq \mu I$ on an open subset $V$ of $M^2$. Since $H$ is parallel in the normal connection of $f$, it follows from the Ricci equation that $[A_H,A_\zeta]=0$, for any point $x\in M^2$ and every normal vector $\zeta\in T_xM^\perp_f$. Then the fact that $A_H$ has distinct eigenvalues on $V$ implies that the eigenvectors of $A_H$ are also eigenvectors of $A_\zeta$ for any $\zeta\in T_xM_f^\perp$, with $x\in V$. Hence all shape operators are simultaneously diagonalizable at any $x\in V$, which implies that $f$ has flat normal bundle on $V$ by the Ricci equation \eqref{eq:Ricci}. In particular, the first normal spaces $N_1$ of $f$ have dimension at most two at any point $x\in V$. Let $W\subset V$ be an open subset, where $L=N_1+span\{\eta\}$ has constant dimension $l\leq3$. It follows from Lemma \ref{lem:RedCodim} and Theorem \ref{teo:redCodim} that $f(W)$ lies in a totally geodesic submanifold $\R\times_{\rho}\Q^{2+l- 1}_{\epsilon}$ of $\R\times_{\rho}\Q^{n}_{\epsilon}$. By analyticity of $f$ (see Remark \ref{rem:analytic}), we conclude that $f(M^2)\subset\R\times_{\rho}\Q^{l+1}_\e$.
\end{proof}

\bibliographystyle{amsplain}

\begin{thebibliography}{99}

\bibitem{AbRo} U. Abresch, H. Rosenberg,\ {\em A Hopf differential for constant mean curvature surfaces in $\Sp^2\times\R$ and $\Hy^2\times\R$.} Acta Math. {\bf 193} (2004), no. 2, 141--174.

\bibitem{AlCaTr} H. Alencar, M. P. do Carmo, R. Tribuzy,\ {\em A Hopf theorem for ambient spaces of dimensions higher than three.} J. Differential Geom. {\bf 84} (2010), no. 1, 1--17.

\bibitem{AlDa} L. J. Alías, M. Dajczer,\ {\em Uniqueness of constant mean curvature surfaces properly immersed in a slab.} Comment. Math. Helv. {\bf 81} (2006), no. 3, 653--663.

\bibitem{Bl} S. Blatt,\ {\em On the analyticity of solutions to non-linear elliptic partial differential equations.} Available at arXiv:2009.08762.

\bibitem{Chen} B.-Y. Chen,\ {\em On the surface with parallel mean curvature vector.} Indiana Univ. Math. J. {\bf 22} (1972/73), 655--666.

\bibitem{Ch} S.S. Chern,\ {\em On surfaces of constant mean curvature in a three-dimensional space of constant curvature.} Lecture Notes in Math., 1007, Springer-Verlag, Berlin, 1983.

\bibitem{Da} M. Dajczer,\ {\em Reduction of the codimension of regular isometric immersions.} Math. Z. {\bf 179} (1982), no. 2, 263--286.

\bibitem{FiVi} C. do Rei Filho, F. Vitório,\ {\em A Bonnet theorem for submanifolds into rotational hypersurfaces.} Results Math. {\bf 71} (2017), no. 1-2, 283--294.

\bibitem{Er} J. Erbacher,\ {\em Reduction of the codimension of an isometric immersion.} J. Differential Geometry {\bf 5} (1971), 333--340.

\bibitem{Ho} H. Hopf,\ {\em Differential geometry in the large.} Lecture Notes in Math., 1000, Springer-Verlag, Berlin, 1983.

\bibitem{MaTo} F. Manfio, R. Tojeiro,\ {\em Hypersurfaces with constant sectional curvature in $\Sp^n\times\R$ and $\Hy^n\times\R$}, Illinois J. Math., {\bf 55} (2011), no. 1, 397--415.

\bibitem{MeTo} B. Mendon\c ca, R. Tojeiro,\ {\em Umbilical submanifolds of $\Sp^n\times\R$.} Canad. J. Math. {\bf 66} (2014), no. 2, 400--428.

\bibitem{Mo} C. B. Jr. Morrey,\ {\em On the analyticity of the solutions of analytic non-linear elliptic systems of partial differential equations. I. Analyticity in the interior.} Amer. J. Math. {\bf 80} (1958), 198--218.


\bibitem{Yau} S. T. Yau,\ {\em Submanifolds with constant mean curvature.} I. Amer. J. Math. {\bf 96} (1974), 346--366.

\end{thebibliography}

\Addresses

\end{document}